\documentclass[11pt]{article}
\usepackage[english]{babel}
\usepackage{amssymb,amsmath,graphicx}
\usepackage{amsthm}
\usepackage{float}
\usepackage{afterpage}
\usepackage{color}
\usepackage[numbers]{natbib}
\usepackage{enumitem}
\usepackage{setspace}
\usepackage[hidelinks]{hyperref}
\usepackage{algorithmic}
\usepackage{algorithm}
\usepackage[table]{xcolor}
\usepackage{multirow}
\usepackage{tikz}
\usepackage{lscape}
\usepackage{graphicx}
\usepackage{ltablex}
\graphicspath{ {C:/Users/Ha/Dropbox/Field partitioning problem/Thibaut Version} }
\usepackage{longtable}
\usepackage{microtype}

\newcolumntype{M}[1]{>{\centering}m{#1}}
\newcolumntype{H}{>{\setbox0=\hbox\bgroup}c<{\egroup}@{}}
\usepackage{nicefrac}


 \bibpunct[, ]{(}{)}{,}{a}{}{,}%
 \def\newblock{\ }%

\newtheorem{definition}{Definition}
\newtheorem{theorem}{Theorem}
\newtheorem{lemma}{Lemma}

\oddsidemargin=0.1in
\topmargin=-0.5in
\textwidth=6.25in
\textheight=8.9in
\parskip=0pt

\widowpenalty10000
\clubpenalty10000

\newcolumntype{d}[1]{D{.}{.}{#1}}

\newcommand{\cO}{{\mathcal O}}

\allowdisplaybreaks

\linespread{1.5}
\title{On three soft rectangle packing problems with the guillotine constraints}

\author{Quoc Trung Bui, Thibaut Vidal, Minh Ho\`ang H\`a}

\begin{document}

\begin{center}

\begin{LARGE}
On three soft rectangle packing problems with guillotine constraints
\end{LARGE}

\vspace*{1cm}

\textbf{Quoc Trung Bui} \\
Daily-Opt Joint Stock Company\\
\href{mailto:trungbui@daily-opt.com}{trungbui@daily-opt.com} \\
\vspace*{0.2cm}
\textbf{Thibaut Vidal} \\
Departamento de Inform\'{a}tica, Pontif\'{i}cia Universidade Cat\'{o}lica do Rio de Janeiro \\
\href{mailto:vidalt@inf.puc-rio.br}{vidalt@inf.puc-rio.br} \\
\vspace*{0.2cm}
\textbf{Minh Ho\`ang H\`a *} \\
University of Engineering and Technology, Vietnam National University\\
\href{mailto:minhhoang.ha@vnu.edu.vn}{minhhoang.ha@vnu.edu.vn} \\
\vspace*{0.4cm}

\end{center}

\noindent
\textbf{Abstract.} We investigate how to partition a rectangular region of length $L_1$ and height $L_2$ into~$n$ rectangles of given areas $(a_1, \dots, a_n)$ using two-stage guillotine cuts, so as to minimize either (i) the sum of the perimeters, (ii) the largest perimeter, or (iii) the maximum aspect ratio of the rectangles. These problems play an important role in the ongoing Vietnamese land-allocation reform, as well as in the optimization of matrix multiplication algorithms. We show that the first problem can be solved to optimality in $\cO(n \log n)$, while the two others are NP-hard. We propose mixed integer programming (MIP) formulations and a binary search-based approach for solving the NP-hard problems. Experimental analyses are conducted to compare the solution approaches in terms of computational efficiency and solution quality, for different objectives.
\vspace*{0.2cm}

\noindent
\textbf{Keywords.} Soft rectangle packing, guillotine constraints, complexity analysis, integer programming.

\thispagestyle{empty}
\pagenumbering{arabic}

\section{Introduction}
\label{sec_introduction}

We consider a family of soft rectangle packing problems in which a rectangular region of length~$L_1$ and height $L_2$ must be partitioned into $n$ rectangles of positive areas $(a_1,\dots,a_n)$, where $\sum_{i=1}^n a_i = L_1 \times L_2$. The areas of the rectangles are fixed, and their position, length and height constitute the decision variables of the problem. 
Three different objectives are considered: minimizing the sum of the rectangle's perimeters, the largest perimeter, or the largest aspect ratio, leading to three problems called \mbox{\textsc{Col-Peri-Sum}}, \textsc{Col-Peri-Max}, and \textsc{Col-Aspect-Ratio} respectively. Finally, the layout of the rectangles is subject to strict rules. As illustrated in Figure \ref{fig:example}, the rectangles should be delimited by \emph{two-stage} guillotine cuts: first cutting the rectangular area horizontally to produce several layers (three on the figure), and then cutting each layer vertically to obtain the rectangles (ten on the figure).

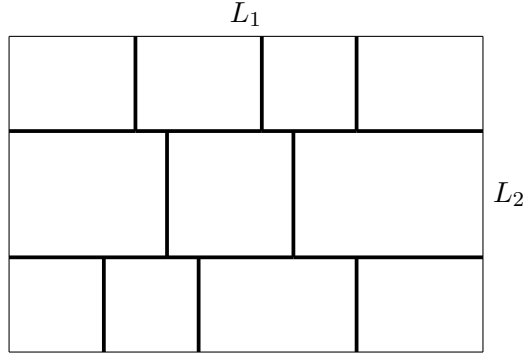
\begin{figure} [htbp]
\centering
\begin{tikzpicture}[scale = 0.42]

\coordinate (N1) at (0, 0);
\coordinate (N2) at (15, 0);
\coordinate (N3) at (15, 10);
\coordinate (N4) at (0, 10);

\coordinate (r1) at (0, 3);
\coordinate (r2) at (15, 3);

\coordinate (r3) at (0, 7);
\coordinate (r4) at (15, 7);

\coordinate (x1) at (3, 0);
\coordinate (y1) at (3, 3);

\coordinate (x2) at (6, 0);
\coordinate (y2) at (6, 3);

\coordinate (x3) at (11, 0);
\coordinate (y3) at (11, 3);

\coordinate (x5) at (5, 3);
\coordinate (y5) at (5, 7);

\coordinate (x6) at (9, 3);
\coordinate (y6) at (9, 7);

\coordinate(x7) at (4, 7);
\coordinate(y7) at (4,10);

\coordinate(x8) at (8, 7);
\coordinate(y8) at (8, 10);

\coordinate(x9) at (11, 7);
\coordinate(y9) at (11, 10);


\path[-] (N3) edge node[anchor = south] {$L_1$} (N4);

\path[-] (x1) edge[ line width=0.5mm]  (y1);
\path[-] (x2) edge[ line width = 0.5mm]  (y2);
\path[-] (x3) edge[ line width = 0.5mm]  (y3);

\path[-] (N1) edge node[anchor = south] {\scriptsize } (x1);
\path[-] (x1) edge node[anchor = south] {\scriptsize } (x2);
\path[-] (x2) edge node[anchor = south] {\scriptsize } (x3);
\path[-] (x3) edge node[anchor = south] {\scriptsize } (N2);
\path[-] (N2) edge node[anchor = west] {\scriptsize } (r2);

\path[-] (x5) edge[ line width = 0.5mm]  (y5);
\path[-] (x6) edge[ line width = 0.5mm]  (y6);

\path[-] (r1) edge[ line width = 0.5mm] node[ anchor = south] {\scriptsize} (x5);
\path[-] (x5) edge[ line width = 0.5mm]  node[anchor = south] {\scriptsize } (x6);
\path[-] (x6) edge[ line width = 0.5mm]  node[anchor = south] {\scriptsize } (r2);
\path[-] (r2) edge  node[anchor = west] {$L_2$} (r4);

\path[-] (x7) edge[ line width = 0.5mm]  (y7);
\path[-] (x8) edge[ line width = 0.5mm]  (y8);
\path[-] (x9) edge[ line width = 0.5mm]  (y9);

\path[-] (r3) edge[ line width = 0.5mm]  node[anchor = south] {\scriptsize } (x7);
\path[-] (x7) edge[ line width = 0.5mm]  node[anchor = south] {\scriptsize } (x8);
\path[-] (x8) edge[ line width = 0.5mm]  node[anchor = south] {\scriptsize } (x9);
\path[-] (x9) edge[ line width = 0.5mm]  node[anchor = south] {\scriptsize } (r4);
\path[-] (r4) edge  node[anchor = west] {\scriptsize } (N3);

\path[-] (N1) edge node[anchor = east] {\scriptsize } (r1);
\path[-] (r1) edge  node[anchor = east] {\scriptsize } (r3);
\path[-] (r3) edge node[anchor = east] {\scriptsize } (N4);

\end{tikzpicture}
\caption{Partitioning the rectangular area by two-stage guillotine cuts -- example solution.}
\label{fig:example}
\end{figure}

Any solution of these problems can be described as a \emph{partition} $\{S_1, \dots, S_m\}$ of the rectangle set $S = \{1,\dots,n\}$ into $m$ layers. Since the length of each layer is fixed to $L_1$, the height $w(S_k)$ of a layer $S_k$ (and therefore of all its contained rectangles) takes value
\begin{equation}
w(S_k) = \frac{\sum_{i \in S_k} a_i} {L_1},
\end{equation}
and the length of each rectangle $i \in S_k$ is $a_i / w(S_k)$.
Based on this observation, the objective of these problems is to find $\{S_1,\dots,S_m\}$ so as to minimize:
\begin{align}
\textsc{Col-Peri-Sum: \hspace*{0.2cm}}  \Phi_1 &= 2 \times \sum_{k=1}^m \sum_{i \in S_k} \left( w(S_k) + \frac{a_i} {w(S_k)} \right) \nonumber \\ 
&=  2  \times \sum_{k=1}^m \left(|S_k| w(S_k) + L_1 \right) \label{obj1} \\[0.5em]
\textsc{Col-Peri-Max: \hspace*{0.2cm}}  \Phi_2 &= 2 \times \max_{k}  \max_{i \in S_k}  \left( w(S_k) + \frac{a_i} {w(S_k)} \right)  \\[0.5em]
\textsc{Col-Aspect-Ratio: \hspace*{0.2cm}}  \Phi_3 &= \max_{k} \max_{i \in S_k}  \max \left(  \frac{a_i} {w(S_k)^2},      \frac {w(S_k)^2}{a_i} \right). \label{obj3}
\end{align}


The main contribution of this paper is to characterize the computational complexity of these problems, proposing an efficient  $\cO(n \log n)$ algorithm for \textsc{Col-Peri-Sum}, and demonstrating that the two other problems are NP-hard. Moreover, we introduce mixed integer programming (MIP) formulations for the NP-hard problems. Finally, we conduct experimental analyzes to determine the limit-size of the instances which can be efficiently solved, and compare the solutions obtained with different objectives.

%

\section{Applications and related work}
\label{moti}

\textbf{Land reform in Vietnam.}
Historically, the agricultural land of Vietnam has been classified into several categories, and each household has been given one plot for each land category, such that even a small agricultural field can be distributed to many households. These division rules have been applied in most provinces of Vietnam to ensure equality among households. However, this has led to a large fragmentation of the land in most provinces of Vietnam \citep{Patrik2006,pham07}. In these provinces, each household owns many small and scattered plots, located in different fields. The province of Vinh Phuc is a striking example: some households have up to 47 plots, and each plot has an average area of only ten square meters \citep{Doe:2009:Online}.

The land fragmentation turned out to be detrimental in the industrialized era. First, households cannot use machines to cultivate small plots, leading to a high production cost. Second, fragmented plots are costly to visit and maintain. Third, the excessive number of tracks separating the plots causes a waste of agricultural land \citep{ref8,ref9,ref7,ref10,Patrik2006}. Therefore, the Vietnamese government considers land fragmentation to be ``a significant barrier to achieving further productivity gains in agriculture'', and initiated a land reform to deal with the situation. This reform aims to reduce the number of land categories, to merge small plots into large fields and finally to repartition these fields into larger plots for households. It has led to successful results in some provinces, as characterized by a significant increase in rice yield attaining 25\% in Quang Nam province \citep{Patrik2006,trung2013}.

The land reform involves two critical tasks: merging small plots into larger fields, and repartitioning these fields equitably while respecting the predefined quantity of land attributed to each household. In this study, we consider the case of rectangular fields, as this is the most common in practice. The fields should be first split by parallel edge-to-edge tracks to facilitate the use of machines, and the resulting sections should then be separated into plots, leading to the two-stage guillotine constraints discussed in the introduction of this paper.

Finally, farmers and local authorities may have distinct objectives and motivations.
The local authorities aim to minimize the amount of wasted land due to the creation of tracks, a goal which is captured by the \textsc{Col-Peri-Sum} objective. In contrast, the farmers wish to have their plots as square as possible to facilitate cultivation. To obtain equitable solutions, this goal can be expressed as a worse-case optimization, leading to the \textsc{Col-Peri-Max} and \textsc{Col-Aspect-Ratio} objectives. These objectives are not strongly conflicting, but they often lead to different land allocation decisions. 

Land-consolidation strategies have been implemented in various other countries, e.g., in Germany \citep{Borgwardt2014,Brieden20014, Borgwardt2011}, Turkey \citep{cay2006, Cay2010262, Cay2013541, Hakli2017}, Japan \citep{Arimoto2010}, Cyprus \citep{Demetriou2013}, China \citep{Huang2011}, and Brazil \citep{Gliesch2017}. However, each country, depending on its own topology, culture, and practice has converged towards a different problem setting. In particular, the two-stage guillotine-cut restrictions and the objective functions relevant to the Vietnamese case have not yet been encountered in other land-consolidation applications. Still, some related problems can be found in the operations research and computer science literature, as discussed in the following.\\

\noindent
\textbf{Soft rectangle packing problems.}
Partitioning an area into polygons of fixed shape or area is a class of problems which has been regularly studied in the operations research and computational geometry literature.
\cite{Beaumont2001,Beaumont2002} defined two optimization problems that seek to partition the unit square into a number of rectangles with given areas, so as to optimize parallel matrix-multiplication algorithms in heterogeneous parallel computing platforms. The first problem aims to minimize the sum of all rectangle perimeters, whereas the second problem aims to minimize the largest perimeter. These problems are special cases of \textsc{Peri-Sum} and \textsc{Peri-Max} where the general rectangular region is a square. The authors introduced an $\nicefrac{7}{4}$-approximate algorithm and an $\nicefrac{2}{\sqrt{3}}$-approximate algorithm for these problems,~respectively.

Later on, \cite{Nagamochi2007523} considered the general \textsc{Peri-Sum}, \textsc{Peri-Max} and \textsc{Aspect-Ratio} problems without guillotine constraints. They introduced an $\cO(n \log n)$-time algorithm which produces a $1.25$-approximate solution for \textsc{Peri-Sum}, a $\nicefrac{2}{\sqrt{3}}$-approximate solution for \textsc{Peri-Max}, and finds a solution with aspect ratio smaller than $\smash{\max\{R, 3, 1 + \hspace*{-0.3cm} \max\limits_{i\in\{1,...,n-1\}}  \hspace*{-0.2cm} \frac{a_{i+1}}{a_i} \}}$ for \textsc{Aspect-Ratio} where $R$ denotes the aspect ratio of the original rectangular area. \cite{Fugenschuh2014} also designed an $\nicefrac{2}{\sqrt{3}}$-approximate algorithm and a branch-and-cut algorithm~for~\textsc{Peri-Sum}.

Other close variants of \textsc{Peri-Sum} have been studied.
\cite{KONG1987239,KONG2} considered the problem of decomposing a square or a rectangle into a number of rectangles of equal area so as to minimize the maximum rectangle perimeter. VLSI floorplan design and facility location applications also led to a number of related studies \mbox{\citep{Young2001,Ji2017,Paes2016}}. 
\cite{Ibaraki} proposed a local search and mathematical programming algorithm to solve rectangular packing problems where the shapes of the rectangles are adjustable within given perimeter and area constraints.

Finally, \cite{Beaumont2002} considered \textsc{Col-Peri-Sum} and \textsc{Col-Peri-Max} as a building block to design approximation algorithms for \textsc{Peri-Sum} and  \textsc{Peri-Max} when the general rectangular region is a square. The authors introduced an exact $\cO(n^2 \log n)$ algorithm for \mbox{\textsc{Col-Peri-Sum}} and two approximation algorithms for \mbox{\textsc{Col-Peri-Max}}. The complexity status of \textsc{Col-Peri-Max} remained open. Moreover, \mbox{\textsc{Col-Aspect-ratio}} has not been studied to this date. These methodological gaps along with the relevance of these problems for the Vietnamese land reform are a strong motivation for additional research.

\section{COL-PERI-SUM can be solved in $\boldsymbol\cO\mathbf{(n \textbf{\,log\,} n)}$}
\label{ComComSec}

A polynomial-time algorithm in $\cO(n^2 \log n)$ for \textsc{Col-Peri-Sum} was proposed in \cite{Beaumont2002}. In this section, we introduce a simple algorithm in~$\cO(n \log n)$ for this problem. To that extent, we show that after ordering the rectangles' indices by non-decreasing area, the \textsc{Col-Peri-Sum} problem can be reduced in $\cO(n)$ to the concave least-weight subsequence problem (CLWS), solvable to optimality in $\cO(n)$ time \citep{Wilber1988418}.

\begin{definition}[\emph{Concave real-value weight function}]
\emph{
A real-value weight function $w(i,j)$ defined for integers $0 \leq i < j \leq n$ is concave if and only if, for $0 \leq i_0 < i_1 < j_0 < j_1 \leq n$, $w(i_0,j_0) + w(i_1,j_1) \leq w(i_0,j_1) + w(i_1,j_0)$.
}
\end{definition}

\begin{definition}[\emph{Concave least-weight subsequence problem}]
\emph{
Let $w(i,j)$ be a concave real-value weight function defined for integers $0 \leq i < j \leq n$. Find an integer $k \geq 1$ and a sequence of integers $0 = l_0 < l_1 < \dots < l_{k-1} < l_k = n$ such that $\sum_{i=0}^{k-1} w(l_i, l_{i+1})$ is minimized.
}
\end{definition}

To do this reduction, we first assume that the indices of the rectangles have been ordered in $\cO(n \log n)$ by non-decreasing area: $a_1 \leq \dots \leq a_n$. 
We now highlight a property of \textsc{Col-Peri-Sum} which allows to focus the search on a smaller subset of solutions.

\begin{theorem}
\emph{
Consider a solution $\mathbf{s}$ of \textsc{Col-Peri-Sum} with cost $\Phi_1(\mathbf{s})$, represented as a partition $\{S_1, \dots, S_m\}$ of the rectangle set. Let $i \in S_{k}$ and $j \in S_{l}$ be two rectangles from different subsets such that $a_{i} > a_{j}$. Any solution $\mathbf{s'}$ obtained by swapping these two rectangles within their respective subsets is such that: 
$$
\begin{cases}
\Phi_1(\mathbf{s'}) < \Phi_1(\mathbf{s})  & \text{if } |S_{k}| > |S_{l}|  \\
\Phi_1(\mathbf{s'}) =  \Phi_1(\mathbf{s})  & \text{if } |S_{k}| = |S_{l}| \\
\Phi_1(\mathbf{s'}) > \Phi_1(\mathbf{s})  & \text{otherwise}
\end{cases}
$$
}
\end{theorem}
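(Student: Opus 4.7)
The plan is to exploit the simplified expression $\Phi_1 = 2 \sum_{k=1}^m (|S_k| w(S_k) + L_1)$ derived in equation (\ref{obj1}), and observe that swapping $i \in S_k$ with $j \in S_l$ changes neither the number of layers $m$, nor the cardinalities $|S_k|$ and $|S_l|$, nor the contents of any other layer $S_r$ with $r \notin \{k,l\}$. Consequently, the only quantities that vary are $w(S_k)$ and $w(S_l)$, and all terms independent of these can be cancelled when computing $\Phi_1(\mathbf{s'}) - \Phi_1(\mathbf{s})$.

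Next, I would write out the new layer heights explicitly: after swapping, $w(S_k') = w(S_k) + (a_j - a_i)/L_1$ and $w(S_l') = w(S_l) + (a_i - a_j)/L_1$. Plugging these into the objective and keeping only the affected terms, the difference simplifies to
\begin{equation*}
\Phi_1(\mathbf{s'}) - \Phi_1(\mathbf{s}) = \frac{2}{L_1}\bigl[|S_k|(a_j - a_i) + |S_l|(a_i - a_j)\bigr] = \frac{2(a_i - a_j)(|S_l| - |S_k|)}{L_1}.
\end{equation*}

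Since $a_i > a_j$ by hypothesis, the factor $(a_i - a_j)$ is strictly positive, so the sign of the difference is governed entirely by the sign of $|S_l| - |S_k|$. Reading off the three cases directly yields the claimed strict inequality, equality, and reverse strict inequality, completing the argument.

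I do not anticipate any genuine obstacle here: the statement reduces to a one-line algebraic manipulation once the closed form (\ref{obj1}) is used, since all $L_1$ constants and all untouched layers drop out. The only point requiring minor care is ensuring that the swap is well-defined (i.e., that $S_k \neq S_l$, which is given) and that the simplified form of $\Phi_1$ is applied rather than the original double sum, as the latter would obscure the cancellation and suggest a much lengthier calculation.
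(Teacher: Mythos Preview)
Your proof is correct and follows essentially the same approach as the paper: both use the simplified form of $\Phi_1$ from Equation~(\ref{obj1}) to compute the cost difference directly, arriving at the same factored expression $\frac{2}{L_1}(|S_k|-|S_l|)(a_j-a_i)$ (equivalently your $\frac{2}{L_1}(a_i-a_j)(|S_l|-|S_k|)$), from which the three cases follow immediately by sign inspection.
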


\begin{proof}
Simply evaluate the cost difference using Equation (\ref{obj1}):
$$
\begin{aligned}
\Delta &=  \Phi_1(\mathbf{s'}) - \Phi_1(\mathbf{s}) \\
&= \frac{2}{L_1} \left( |S_{k}| \left (a_{j} - a_{i} + \sum_{x \in S_{k}} a_x \right) + |S_{l}| \left (a_{i} - a_{j} + \sum_{x \in S_{l}} a_x \right)  -  |S_{k}| \sum_{x \in S_{k}} a_x  -  |S_{l}| \sum_{x \in S_{l}} a_x    \right)  \\
&= \frac{2}{L_1} \left( |S_{k}| - |S_{l}| \right)  (a_{j} - a_{i}). \hspace*{0.3cm} \qedhere
\end{aligned}
$$
\end{proof}

This theorem defines some important features of the optimal solutions of \textsc{Col-Peri-Sum}:
\begin{itemize}[nosep,leftmargin=0.15in]
\item First, without loss of generality, any solution of \textsc{Col-Peri-Sum} can be presented in such a way that $|S_1| \geq \dots \geq |S_m|$ (re-ordering the subsets according to their cardinality).
\item With this representation, if $k < l$ and $|S_k| = |S_l|$, there exists an optimal solution such that $a_i \leq a_j$ for all $i \in |S_k|$ and $j \in |S_l|$.
\item Finally, if $k < l$ and $|S_k| > |S_l|$, all optimal solutions satisfy $a_i \leq a_j$ for all $i \in |S_k|$ and~$j \in |S_l|$.
\end{itemize}

\noindent
Following from these observations, there exists an optimal solution $\mathbf{s^*} = \{S_1, \dots, S_m\}$ of \textsc{Col-Peri-Sum} such that each $S_k$ for $k \in \{1,\dots,m\}$ is a subsequence (of consecutive indices) of the sequence $\langle a_1, a_2, \dots, a_n \rangle$. 
Therefore, we can find an optimal solution of \textsc{Col-Peri-Sum} by solving a least weight subsequence problem instance over the set of integers $0 \leq i < j \leq n$ with the weight function: $$w_\textsc{P}(i,j) = 2 \left( L_1 + \frac{(j-i)}{L_1}\sum_{k = i+1}^{j}a_k \right),$$
where $w_\textsc{P}(i,j)$ represents the sum of the perimeters of the rectangles of indices $(i+1,\dots,j)$ when positioned in a single layer. Finally, Theorem \ref{concave} proves that this weight function is concave, leading to an instance of CLWS. 

\begin{theorem}
\label{concave}
\emph{
The weight function $w_\textsc{P}(i,j)$ is concave.
}
\end{theorem}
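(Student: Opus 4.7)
The plan is to verify the concavity inequality directly by expanding the definition of $w_\textsc{P}$ after introducing convenient shorthand. Concretely, I would fix $0 \leq i_0 < i_1 < j_0 < j_1 \leq n$ and aim to show
$$
w_\textsc{P}(i_0, j_0) + w_\textsc{P}(i_1, j_1) \;\leq\; w_\textsc{P}(i_0, j_1) + w_\textsc{P}(i_1, j_0).
$$
The additive constants $2L_1$ cancel between the two sides, so only the bilinear term $\frac{2(j-i)}{L_1}\sum_{k=i+1}^{j} a_k$ actually contributes.

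To make the expansion tractable I would introduce three partial sums and three gap lengths that partition the relevant index range. Set
\begin{align*}
P &= \sum_{k=i_0+1}^{i_1} a_k, & Q &= \sum_{k=i_1+1}^{j_0} a_k, & R &= \sum_{k=j_0+1}^{j_1} a_k, \\
p &= i_1 - i_0, & q &= j_0 - i_1, & r &= j_1 - j_0,
\end{align*}
so that $w_\textsc{P}(i_0,j_0)$ involves $(p+q)(P+Q)$, $w_\textsc{P}(i_1,j_1)$ involves $(q+r)(Q+R)$, $w_\textsc{P}(i_0,j_1)$ involves $(p+q+r)(P+Q+R)$, and $w_\textsc{P}(i_1,j_0)$ involves $qQ$. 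Each of the six quantities $P, Q, R, p, q, r$ is non-negative, and $p, r$ are strictly positive since the four indices are distinct.

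Expanding both sides of the inequality in these variables, collecting terms, and subtracting, I expect every mixed term involving $Q$ or a product with $q$ to cancel out, leaving exactly the residual
$$
\bigl(w_\textsc{P}(i_0,j_1) + w_\textsc{P}(i_1,j_0)\bigr) - \bigl(w_\textsc{P}(i_0,j_0) + w_\textsc{P}(i_1,j_1)\bigr) = \frac{2}{L_1}(pR + rP),
$$
which is clearly non-negative. This establishes concavity.

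Since the computation is a direct algebraic identity, I do not anticipate any substantive obstacle; the only risk is a bookkeeping slip when expanding the four products, so I would organize the terms in a small table indexed by the monomials $pP, pQ, pR, qP, qQ, qR, rP, rQ, rR$ to make the cancellations transparent. It is worth noting that the argument does \emph{not} use the sorted ordering $a_1 \leq \dots \leq a_n$: concavity of $w_\textsc{P}$ holds for any non-negative sequence, which is consistent with the fact that the sort was only needed earlier to guarantee that an optimal partition consists of consecutive subsequences.
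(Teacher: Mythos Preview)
Your proposal is correct and follows essentially the same direct computation as the paper: both expand $w_\textsc{P}(i_0,j_1)+w_\textsc{P}(i_1,j_0)-w_\textsc{P}(i_0,j_0)-w_\textsc{P}(i_1,j_1)$ and simplify to $\frac{2}{L_1}\bigl((i_1-i_0)\sum_{x=j_0+1}^{j_1}a_x+(j_1-j_0)\sum_{x=i_0+1}^{i_1}a_x\bigr)$, which in your shorthand is exactly $\frac{2}{L_1}(pR+rP)$. Your introduction of the variables $P,Q,R,p,q,r$ and the closing remark that the sorted order is not used are helpful touches but do not change the argument.
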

\begin{proof}
For $0 \leq i_0 < i_1 < j_0 < j_1 \leq n$, one can directly verify that: \vspace*{-0.1cm}
$$ 
\begin{aligned}
\Delta' &=  w(i_0,j_1) + w(i_1,j_0) - w(i_0, j_0) - w(i_1,j_1)  \nonumber \\
 &= \frac{2}{L_1} \left( (j_1 - i_0) \sum_{x = i_0+1}^{j_1} a_x  + (j_0- i_1) \sum_{x = i_1+1}^{j_0} a_x - (j_0 - i_0) \sum_{x = i_0+1}^{j_0} a_x  -  (j_1- i_1) \sum_{x = i_1+1}^{j_1} a_x \right)  \nonumber \\
  & =  \frac{2}{L_1} \left( (i_1 - i_0) \sum_{x = j_0+1}^{j_1} a_x +  (j_1 - j_0)\sum_{x = i_0+1}^{i_1} a_x  \right)   > 0. \hspace*{0.5cm} \qedhere
\end{aligned}
$$

\end{proof}

As a consequence, after prior ordering of the rectangles in $\cO(n \log n)$, an optimal solution of \textsc{Col-Peri-Sum} can be found by solving an instance of CLWS, e.g., using the $\cO(n)$ algorithm of \citep{Wilber1988418}. \textsc{Col-Peri-Sum} can thus be solved in $\cO(n \log n)$ in the general case, and in $\cO(n)$ if the rectangles are ordered by non-decreasing (or non-increasing) area in the input.

\section{NP-hardness results}

In the previous section, we have seen that an efficient $\cO(n \log n)$ algorithm can be designed for \textsc{Col-Peri-Sum}. In contrast, we will show in the following that the ``min-max'' version of this problem, \textsc{Col-Peri-Max}, as well as the \textsc{Col-Aspect-Ratio} problems are more difficult.

Let \textsc{Col-Peri-Max-$\Phi$} and \textsc{Col-Aspect-Ratio-$\Phi$} be the decision problems in which one must determine whether there exists a solution of value at most $\Phi$ for \textsc{Col-Peri-Max}, and \textsc{Col-Aspect-Ratio}, respectively. We will show that these two problems are NP-complete, by reduction from \textsc{2-Partition} \citep{Garey:1990:CIG:574848}, hence establishing the NP-hardness of \textsc{Col-Peri-Max} and \textsc{Col-Aspect-Ratio}.

\pagebreak

\begin{theorem}
\emph{
\textsc{Col-Peri-Max-$\Phi$} is NP-complete.
}
\end{theorem}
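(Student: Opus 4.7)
The plan is to first verify membership in NP, then establish NP-hardness by polynomial-time reduction from \textsc{2-Partition}. Membership is immediate: a partition of the $n$ rectangles into layers serves as a polynomial-size certificate, and the maximum perimeter of the induced packing can be computed in $\cO(n)$ time and compared against $\Phi$.

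For the reduction, given a \textsc{2-Partition} instance with positive integers $b_1, \dots, b_n$ summing to $2B$, I would construct a \textsc{Col-Peri-Max} instance consisting of $n$ rectangles of areas proportional to $b_1, \dots, b_n$ (likely augmented by a small number of auxiliary ``anchor'' rectangles of carefully chosen sizes) inside a container with dimensions $L_1$ and $L_2$ chosen so that two equal-height layers exactly cover the total area; the threshold $\Phi$ is then set to the maximum rectangle perimeter produced by the natural balanced two-layer packing. The forward direction is direct: a valid 2-partition splits the $b_i$'s into two subsets of equal sum, yielding two layers of equal height in which every rectangle attains perimeter at most~$\Phi$.

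The backward direction is the core technical step. Given any feasible packing with $m$ layers of heights $h_1, \dots, h_m$ summing to $L_2$, the constraint $h_k + a_i/h_k \leq \Phi/2$ for every rectangle $i$ in layer $k$ is a quadratic inequality in $h_k$ whose solution set is an interval $[h_{-}(a_{\max}^{(k)}), h_{+}(a_{\max}^{(k)})]$ depending on the largest rectangle $a_{\max}^{(k)}$ present in that layer. By combining these interval constraints across all layers with the height-sum condition $\sum_{k=1}^{m} h_k = L_2$, I would argue that the only feasible configuration is $m=2$ with $h_1 = h_2 = L_2/2$, at which point the induced rectangle partition directly encodes a valid 2-partition of the $b_i$'s.

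The main obstacle is designing the construction tightly enough that $m=1$ (ruled out because $L_2$ would exceed the feasible height range of a single layer), $m \geq 3$ (layer heights would be forced below the feasibility interval for any layer containing the largest anchor rectangle), and unbalanced $m=2$ packings (strictly worse perimeter by the quadratic analysis, with the jump in $h$ at least the granularity of the integer $b_i$'s) are all simultaneously ruled out. The $m \geq 3$ case is expected to be the most delicate, since small rectangles could in principle be redistributed across many thin layers without violating any single quadratic constraint; introducing auxiliary anchor rectangles of sufficiently large area, which force every layer to contain at least one ``large'' rectangle, is the standard device used to close this loophole.
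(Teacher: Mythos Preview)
Your high-level plan—reduce from \textsc{2-Partition}, target a balanced two-layer packing, and argue via the quadratic constraint $h + a/h \le \Phi/2$—is the same as the paper's. But you are overcomplicating the backward direction, and the difficulties you anticipate (case analysis on $m$, auxiliary anchor rectangles to force every layer to contain a large item) signal that you have not yet found the key idea.

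The paper's construction is simply: set $a_i = c_i \cdot c_{\max}$ (where $c_{\max} = \max_i c_i$), $L_1 = \tfrac{1}{2}\sum_i c_i$, $L_2 = 2c_{\max}$, and $\Phi = 4c_{\max}$. No auxiliary rectangles. The point is that the single largest rectangle now has area exactly $c_{\max}^2$, and the bound $h + c_{\max}^2/h \le 2c_{\max}$ forces $(h - c_{\max})^2 \le 0$, i.e.\ $h = c_{\max}$ \emph{exactly}. Hence in \emph{any} feasible packing, regardless of the number of layers, the layer containing this one rectangle must have height $c_{\max}$ and therefore total area $L_1 c_{\max} = \tfrac{1}{2}\sum_i a_i$; the indices in that layer immediately give a \textsc{2-Partition} witness. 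This one observation handles $m=1$, $m=2$, and $m\ge 3$ simultaneously—there is nothing to rule out and no need for anchors.

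The gap in your plan is precisely the construction. Without a scaling that makes the largest area a perfect square at the threshold perimeter, the feasibility interval $[h_-(a_{\max}), h_+(a_{\max})]$ has positive width, and then you really would need the extra machinery you describe to control the remaining layers. With the right scaling, that interval degenerates to a single point and the backward direction becomes a one-line argument.
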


\begin{proof}
In \textsc{2-Partition}, we are given $n$ positive integers $c_1, \dots, c_n$, and should determine whether there is a partition $S_1 \cup S_2 = \{1,\dots,n\}$, $S_1 \cap S_2 = \varnothing$ such that $\sum_{x \in S_1} c_x = \sum_{x \in S_2} c_x$.

Let $c_\textsc{max} =  \max_{i \in \{1,\dots,n\}} c_i$, and consider the following \textsc{Col-Peri-Max-$\Phi$} instance:
\begin{itemize}[nosep,leftmargin=0.15in]
\item[--] a rectangular area of length $L_1 = \frac{1}{2} \sum_{i = 1}^{n} c_i$ and height $L_2  = 2  c_\textsc{max}$; 
\item[--] for $i \in \{1,\dots,n\}$,  rectangle $i$ has an area $a_i = c_i  \times c_\textsc{max}$;
and
\item[--] $\Phi = 4 \times c_\textsc{max}$.
\end{itemize}


Assume that \textsc{2-Partition} is \textsc{True}: there exists a partition $(S_1,S_2)$ such that $\sum_{x \in S_1} c_x = \sum_{x \in S_2} c_x$. Consider a solution of \textsc{Col-Peri-Max-$\Phi$} in which the set of rectangles has been partitioned with $(S_1,S_2)$ into two layers. Each layer has the same total area, forming a solution in which all rectangles have one side of height $\frac{L_2}{2} = c_\textsc{max}$. With this configuration, the rectangle of largest area has the largest perimeter, equal to $4 \times c_\textsc{max} = \Phi$, and thus \textsc{Col-Peri-Max-$\Phi$}~is~\textsc{True}.

Assume that the \textsc{2-Partition} instance is \textsc{False}.
Consider a solution of \textsc{Col-Peri-Max}, and let $S_k$ be the layer which contains the largest rectangle with area $c_\textsc{max}^2$. The sum of areas in $S_k$ is different from $\frac{L_1 \times L_2}{2}$, and thus the height of this layer is different from $c_\textsc{max}$. Hence, the soft rectangle of area $c_\textsc{max}^2$ is not arranged as a square, its perimeter exceeds $4 \times c_\textsc{max}$, and \textsc{Col-Peri-Max-$\Phi$} is \textsc{False}.
\end{proof}

\begin{theorem}
\emph{
\textsc{Col-Aspect-Ratio-$\Phi$} is NP-complete.
}
\end{theorem}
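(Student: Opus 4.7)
The plan is to reduce from \textsc{2-Partition}, in the spirit of the previous proof. Given a \textsc{2-Partition} instance $c_1,\dots,c_n$ with $\sum_i c_i = 2T$, let $c_{\max} = \max_i c_i$ and $c_{\min} = \min_i c_i$. I would reuse the same encoding as for \textsc{Col-Peri-Max-$\Phi$}: rectangular region $L_1 = T$, $L_2 = 2c_{\max}$, item areas $a_i = c_i c_{\max}$, and set the aspect-ratio threshold to $\Phi = c_{\max}/c_{\min}$.

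For the \textsc{True} direction, a balanced 2-Partition $(S_1,S_2)$ yields two layers of area $Tc_{\max}$ and height exactly $c_{\max}$. The rectangle of area $c_{\max}^2$ then becomes a $c_{\max}\times c_{\max}$ square (aspect~$1$), and every other rectangle of area $c_i c_{\max}$ has dimensions $c_i\times c_{\max}$ and aspect $c_{\max}/c_i \leq \Phi$, so the produced instance is \textsc{True}.

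For the \textsc{False} direction, I would suppose for contradiction that a feasible solution of value at most $\Phi$ exists and study the layer $S_k$ containing the rectangle of area $c_{\max}^2$. The aspect bound on this rectangle confines its layer height $w$ to $[\sqrt{c_{\min}c_{\max}},\,c_{\max}\sqrt{c_{\max}/c_{\min}}]$. Intersecting this interval with the aspect bound $w \in [\sqrt{c_i c_{\min}},\,c_{\max}\sqrt{c_i/c_{\min}}]$ coming from every other rectangle of area $c_i c_{\max}$ in the same layer---and especially with $w \in [c_{\min},c_{\max}]$ from any rectangle of area $c_{\min}c_{\max}$---should collapse the admissible window of $w$ to the single point $w = c_{\max}$. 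Since $w = c_{\max}$ is equivalent to $\sum_{i \in S_k} c_i = T$, this would contradict the \textsc{False} hypothesis.

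The main obstacle, in contrast with the perimeter argument, is precisely this collapse step: the aspect-ratio bound on the largest rectangle alone does not force $w = c_{\max}$, and a layer that does not contain both a maximum and a minimum rectangle still enjoys considerable slack in its height. I expect the proof to circumvent this by slightly modifying the construction---e.g., by appending auxiliary square items of area $c_{\max}^2$ that every layer must accommodate so as to tighten the height constraints, or by exploiting uniqueness assumptions on $c_{\max}$ and $c_{\min}$---so that integrality of the $c_i$ ultimately rules out any subset sum different from $T$ from producing the forced height.
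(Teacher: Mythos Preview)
Your proposal has a genuine gap, and you correctly identify it yourself: the collapse step in the \textsc{False} direction does not go through. In fact the reduction as stated is simply wrong. Take the \textsc{2-Partition} instance $c_1=1$, $c_2=2$, $c_3=5$ (sum $8$, no subset summing to $4$, hence \textsc{False}). Your construction gives $L_1=4$, $L_2=10$, areas $(5,10,25)$, and threshold $\Phi=c_{\max}/c_{\min}=5$. But the two-layer solution $\{3\}\,/\,\{1,2\}$ has layer heights $25/4$ and $15/4$, and the worst aspect ratio is that of rectangle~$1$, namely $(15/4)/(4/3)=45/16<3$. So \textsc{Col-Aspect-Ratio-$\Phi$} is \textsc{True} while \textsc{2-Partition} is \textsc{False}. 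The problem is structural: setting $\Phi=c_{\max}/c_{\min}$ is far too permissive, and nothing in your instance forces any layer to hit a prescribed height.

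The paper's fix is not a ``slight modification'' of your construction but a different gadget. It adjoins to the $n$ original rectangles two auxiliary rectangles of very large area $M$ and two of very small area $1/M$, with $M$ chosen polynomially large in the input so that (i) any layer lacking a big rectangle is forced to be extremely thin, giving some rectangle an aspect ratio exceeding $M$, and (ii) with exactly two layers each containing one big rectangle, the big rectangle's aspect ratio stays at most $M$ only when its layer has height exactly $1$, which in turn forces the $c_i$'s in the two layers to sum to $C/2$ each. The tiny $1/M$ rectangles cannot compensate an imbalance among the $c_i$. Thus the sentinel rectangles, not any interaction between $c_{\max}$ and $c_{\min}$, are what pin down the layer heights.
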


\begin{proof}
As previously, consider an instance of \textsc{2-Partition} with $n$ positive integers $c_1, \dots, c_n$.
Let $C = \sum_{i=1}^n c_i$ and $M = \frac{2(C + 1)^2}{\min_{i \in \{1,\dots,n\}} c_i}$. Define an instance of \textsc{Col-Aspect-Ratio-$\Phi$} as follows:
\begin{itemize}[nosep,leftmargin=0.15in]
\item[--] a rectangular area of length $L_1 = M + \frac{1}{M} + \frac{C}{2}$ and height $L_2 =  2$;
\item[--] $n$ soft rectangles with areas $c_1,\dots, c_n$ as well as two soft rectangles of area $M$ and two soft rectangles of area $\frac{1}{M}$; and
\item[--] $\Phi = M$.
\end{itemize}

If \textsc{2-Partition} is $\textsc{True}$, there exists a partition $(S_1,S_2)$ such that $\sum_{x \in S_1} c_x = \sum_{x \in S_2} c_x = \frac{C}{2}$. We build a solution of \textsc{Col-Aspect-Ratio} with two layers containing the rectangles of $S_1$~and~$S_2$, respectively, as well as one pair of rectangles of area $M$ and $\frac{1}{M}$ each. Each layer has length $M + \frac{1}{M}+ \frac{C}{2}$ and height $1$. In this configuration, a maximum aspect ratio of~$M$, is jointly attained by the largest and smallest rectangles in each layer, and thus \textsc{Col-Aspect-Ratio-$\Phi$} is $\textsc{True}$.

Now, assume that \textsc{2-Partition} is $\textsc{False}$, and discern three possible classes of solutions:
\begin{itemize}[nosep]
\item Consider a solution of \textsc{Col-Aspect-Ratio} with one layer. The rectangle of size $\frac{1}{M}$ has an aspect ratio of $4M$, which exceeds $\Phi$.
\item Consider a solution of \textsc{Col-Aspect-Ratio} with two or more layers, where at least one layer does not contain a rectangle of size $M$. Let $c$ be the area of the largest element in this layer. Two cases should be discerned:
\begin{itemize}

\item If $c = \frac{1}{M}$, then the layer contains one or two small rectangles of~area~$\frac{1}{M}$, and no other rectangle. The aspect ratio of one such rectangle can be computed as the ratio of its length $l_1 \geq \frac{1}{2} ( M + \frac{1}{M}+ \frac{C}{2})$ over its height $\smash{l_2 \leq 2 \times \frac{ \frac{2}{M}}{2M + \frac{2}{M} + C}}$. As such, $\Phi \geq M \times  \frac{(M + \frac{1}{M}+ \frac{C}{2})^2}{4} > M.$

\item Otherwise, there exists at least one rectangle $c_i$ in the layer, and the total area of the layer does not exceed $C+\frac{2}{M}$. The length $l_1$ and height $l_2$ of the rectangle of area $c_i$ satisfy $l_1 \geq \frac{c_i}{C+\frac{2}{M}}  \times ( M + \frac{1}{M}+ \frac{C}{2})$ and $\smash{l_2 \leq  2 \times \frac{ C+\frac{2}{M}}{2M + \frac{2}{M} + C}}$. As such, 
$$\Phi \geq \frac{c_i (M + \frac{1}{M}+ \frac{C}{2})^2}{(C+\frac{2}{M})^2} > \frac{\text{min}_{i=1}^n c_i \times M^2}{(C+1)^2} = M.$$

\end{itemize}
\item Finally, consider a solution of \textsc{Col-Aspect-Ratio} with two layers, where each layer contains exactly one rectangle of size $M$. Since there is no feasible solution of \textsc{2-Partition}, the total areas of the layers are different (the smaller rectangles cannot re-balance the sum due to their small area $1/M$). In the layer of smallest area, the rectangle of area $M$ has a length $l_1 > M$ and height $l_2 < 1$, and thus an aspect ratio $\Phi = \frac{x}{y} > M$.
\end{itemize}

In all cases, there is no solution with an aspect ratio smaller or equal to $\Phi$, and thus \textsc{Col-Aspect-Ratio-$\Phi$} is $\textsc{False}$.
\end{proof}

\section{Mixed integer programming models}
\label{MixIntProSec}

Since \textsc{Col-Peri-Max} and \textsc{Col-Aspect-Ratio} are NP-hard, this section proposes mixed integer programming formulations of these problems, which can be solved to produce optimal solutions for small and medium scale instances.

These models describe a solution with $n$ layers in which some of the layers can be empty.
We associate one binary variable $x_{ik}$ and one continuous variable $w_{ik}$ for each rectangle $i$ and layer $k$. Variable $x_{ik}$ takes value $1$ if and only if rectangle $i$ belongs to layer $k$, and variable $w_{ik}$ represents the length of the soft rectangle $i$ when placed in layer $k$, and $0$ otherwise. Finally, each binary variable $y_k$ takes value $1$ if layer $k$ is non-empty, and $0$ otherwise.

\subsection{Formulation of COL-PERI-MAX}

The mathematical formulation of \textsc{Col-Peri-Max} is given in Equations (\ref{MIP_0})--(\ref{MIP_15}):
\begin{align}
\label{MIP_0}
\text{minimize} \hspace*{0.2cm}& \Phi_2 \\
\label{MIP_1}
\text{s.t.} \hspace*{0.2cm} & 2(L_1+L_2)(x_{ik} - 1) + 2 \left( w_{ik} + \sum_{j = 1}^{n}\frac{a_jx_{jk}}{L_1} \right)  \leq \Phi_2 & i, k \in \{1, \dots, n\}
\\
\label{MIP_2}
&\sum_{k = 1}^{n} x_{ik} = 1 & i  \in \{1, \dots, n\}
\\
\label{MIP_3}
&\sum_{i = 1}^{n}x_{ik} \geq y_k  & k \in \{1, \dots, n\}
\\
\label{MIP_4}
& 
 x_{ik} \leq y_k  & i,k \in \{1, \dots, n\}
\\
\label{MIP_41}
& \sum_{i = 1}^{n}w_{ik} = L_1 y_k & k \in \{1, \dots, n\}
\\
\label{MIP_5}
& w_{ik} \leq L_1 x_{ik} & i,k \in \{1, \dots, n\}
\\
\label{MIP_6}
&a_i x_{ik} \leq L_2 w_{ik}  & i,k \in \{1, \dots, n\}
\\
\label{MIP_7}
&a_j w_{ik}  -  a_i w_{jk}  \leq a_j L_1(2 - x_{ik} - x_{jk})  & i,j,k \in \{1, \dots, n\}, i \neq j
\\
\label{MIP_8}
& a_i w_{jk}  -  a_j w_{ik} \leq a_i L_1 (2 - x_{ik} - x_{jk})  & i,j,k \in \{1, \dots, n\}, i \neq j
\\
\label{MIP_12}
&x_{ji} \in \{0,1\} &  i, j \in \{1, \dots, n\}
\\
\label{MIP_13}
&w_{ik} \geq 0 &  i,k \in \{1, \dots, n\}
\\
\label{MIP_15}
&y_{k} \in \{0,1\} & k \in \{1, \dots, n\}
\\
\label{MIP_16}
& \Phi_2 \geq 0
\end{align} 

Constraints (\ref{MIP_2})--(\ref{MIP_4}) ensure that every rectangle is included in a layer and that $y_k$ takes value~$1$ when at least one rectangle is included in layer $k$.
Constraints (\ref{MIP_41}) state that the sum of the length of the rectangles of each layer $k$ equals $L_1$ if this layer is used ($y_k = 1$), and~$0$ otherwise.
Constraints (\ref{MIP_5}) and (\ref{MIP_6}) impose that $(w_{ik} = 0) \Leftrightarrow (x_{ik} = 0)$.
Finally, Constraints (\ref{MIP_7}) and (\ref{MIP_8}) ensure that if two rectangles $i$ and $j$ are in the same layer $k$, then $a_i / w_{ik} = a_j / w_{jk}$.

This formulation can be strengthened with the addition of some simple optimality cuts which eliminate symmetrical solutions:
\begin{align}
& y_k \geq y_{k+1}   &  k \in \{ 1, \dots, n-1 \}   \label{MIP_1b}\\
& x_{ik}=0                & i \in  \{ 1,\dots,n \},  k \in \{i+1,\dots, n\}  \label{MIP_2b}
\end{align}
The first set of constraints forces the use of layers according to the order of their indices, while the second set of constraints forces any rectangle~$i$ to belong to a layer of index $k \in \{1,\dots,i\}$.

\subsection{Formulation of COL-ASPECT-RATIO}

The objective function $\Phi_3$ is nonlinear, and we did not find a direct mixed integer programming formulation of \textsc{Col-Aspect-Ratio}. Instead, we propose two alternative approaches to generate optimal solutions for this problem. The first approach relies on a change of objective which leads to a linear formulation returning the same optimal solutions as \textsc{Col-Aspect-Ratio}. The second approach exploits the fact that the decision problem \textsc{Col-Aspect-Ratio-$\Phi$}  can be formulated as a MIP. Solving this subproblem in a binary search allows to solve the original optimization problem.

\subsubsection{First approach -- Change of objective function}

We introduce an alternative objective function $\Phi_4$ for \textsc{Col-Aspect-Ratio} which can be modeled in a linear formulation. This objective function can be computed as:
\begin{align}
\Phi_4 &= \max_{k} \max_{i \in S_k}  \frac{ |w(S_k) - \frac{a_i}{w_(S_k)}|}{\sqrt{a_i}}. \label{obj4}
\end{align}

\noindent
The following lemma will be used to prove the equivalence between the two objectives:
\begin{lemma}
\label{lemma1}
\emph{
Given two soft rectangles $i$ and $j$ with side lengths $(l_i, h_i)$ and $(l_j, h_j)$, we have $$ \frac{\max(l_i,h_i)}{\min (l_i,h_i)} \geq \frac{\max (l_j, h_j)}{\min (l_j, h_j)} \iff \frac{|l_i - h_i|}{\sqrt{l_ih_i}} \geq \frac{|l_j - h_j|}{\sqrt{l_jh_j}}.$$}
\end{lemma}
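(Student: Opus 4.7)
My plan is to reduce both quantities to a common monotone transformation of a single variable, the aspect ratio of a rectangle. For a rectangle with sides $(l,h)$, let $r = \max(l,h)/\min(l,h) \geq 1$ denote its aspect ratio (the quantity on the left-hand side of the stated equivalence). The key observation is that the expression $|l-h|/\sqrt{lh}$ depends on $(l,h)$ only through $r$: factoring $\min(l,h)$ out of both the numerator and the denominator gives
\begin{equation*}
\frac{|l-h|}{\sqrt{lh}} \;=\; \frac{\max(l,h) - \min(l,h)}{\sqrt{\min(l,h)\max(l,h)}} \;=\; \sqrt{\frac{\max(l,h)}{\min(l,h)}} - \sqrt{\frac{\min(l,h)}{\max(l,h)}} \;=\; \sqrt{r} - \frac{1}{\sqrt{r}}.
\end{equation*}

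Define $f(r) = \sqrt{r} - 1/\sqrt{r}$ on $[1,\infty)$. A quick derivative computation (or directly: both $\sqrt{r}$ is increasing and $1/\sqrt{r}$ is decreasing in $r$) shows that $f$ is strictly increasing on $[1,\infty)$, and in particular one-to-one on this domain. Applying this to both rectangles, we obtain $|l_i - h_i|/\sqrt{l_i h_i} = f(r_i)$ and $|l_j - h_j|/\sqrt{l_j h_j} = f(r_j)$, where $r_i$ and $r_j$ are the respective aspect ratios. Hence $r_i \geq r_j$ if and only if $f(r_i) \geq f(r_j)$, which is exactly the claimed equivalence.

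There is no real obstacle: the only care needed is in the algebraic manipulation of $|l-h|/\sqrt{lh}$, which must be done by explicitly separating the larger and smaller side to avoid sign issues in the absolute value. Once $f$ is exhibited, strict monotonicity is immediate and the equivalence follows in one line.
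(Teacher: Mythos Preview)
Your proof is correct and follows essentially the same approach as the paper: both arguments rewrite $|l-h|/\sqrt{lh}$ as $\sqrt{r}-1/\sqrt{r}$ where $r$ is the aspect ratio, and then use that this expression is strictly increasing in $r\geq 1$. The only difference is presentational---the paper carries out the monotonicity step via an explicit chain of equivalent inequalities (assuming $l_i\geq h_i$, $l_j\geq h_j$ and multiplying through by the positive factor $1+1/\sqrt{(l_i/h_i)(l_j/h_j)}$), whereas you package it as monotonicity of the function $f(r)=\sqrt{r}-1/\sqrt{r}$ on $[1,\infty)$.
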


\begin{proof}
Without loss of generality, we can assume that $l_i \geq h_i$ and $l_j \geq h_j$.
Then,
\begin{align*}
  \frac{\max(l_i,h_i)}{\min(l_i,h_i)} &\geq \frac{\max(l_j, h_j)}{\min(l_j, h_j)}  \\
\iff  \frac{l_i}{h_i} &\geq \frac{l_j}{h_j}  \\
\iff  \sqrt{\frac{l_i}{h_i}} &\geq  \sqrt{\frac{l_j}{h_j}} \\
\iff  \left (\sqrt{\frac{l_i}{h_i}} - \sqrt{\frac{l_j}{h_j}}\right ) &\left (1+ \frac{1}{\sqrt{\frac{l_i}{h_i}\frac{l_j}{h_j}}} \right )  \geq  0 \\
\iff  \sqrt{\frac{l_i}{h_i}} - \sqrt{\frac{h_i}{l_i}} & \geq  \sqrt{\frac{l_j}{h_j}} - \sqrt{\frac{h_j}{l_j}} \\
\iff  \frac{l_i - h_i}{\sqrt{l_ih_i}} &\geq  \frac{l_j - h_j}{\sqrt{l_jh_j}}\\
\iff  \frac{|l_i - h_i|}{\sqrt{l_ih_i}} &\geq  \frac{|l_j - h_j|}{\sqrt{l_jh_j}} \qedhere
\end{align*}
\end{proof}

\begin{theorem} \label{optimal}
\emph{
Let $s_3$ and $s_4$ be two optimal solutions obtained with objectives $\Phi_3$ and $\Phi_4$, respectively. Then, $\Phi_3(s_3) = \Phi_3(s_4)$, $\Phi_4(s_3) = \Phi_4(s_4)$, and $s_3$ and $s_4$ are optimal for the objectives $\Phi_4$ and $\Phi_3$, respectively.
}
\end{theorem}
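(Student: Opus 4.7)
The plan is to lift Lemma \ref{lemma1} from pairwise rectangle comparisons to whole-solution comparisons, arguing that $\Phi_3$ and $\Phi_4$ rank the feasible set identically. Once this rank-equivalence is established, any minimizer of one objective is automatically a minimizer of the other, and the four claims of the theorem follow immediately.

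As a first step, I would observe that the per-rectangle ratios inside the outer maxima of $\Phi_3$ and $\Phi_4$ correspond, via the identification $(l,h) = (a_i / w(S_k), w(S_k))$, to the two quantities appearing in Lemma \ref{lemma1}. Applying the lemma pairwise within any fixed solution $s$, a rectangle dominating all others under the $\max(l,h)/\min(l,h)$ ratio also dominates them under the $|l-h|/\sqrt{lh}$ ratio, so one can pick a common argmax $i^\star \in s$ that realizes both $\Phi_3(s)$ and $\Phi_4(s)$.

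For the cross-solution step, given two feasible solutions $s$ and $s'$ with common argmaxes $i^\star$ and $j^\star$ selected as above, a single further application of Lemma \ref{lemma1} to the pair $(i^\star, j^\star)$ yields
$$\Phi_3(s) \leq \Phi_3(s') \iff \Phi_4(s) \leq \Phi_4(s').$$
Hence the set of $\Phi_3$-optimal solutions coincides with the set of $\Phi_4$-optimal solutions: $s_3$ is also $\Phi_4$-optimal and $s_4$ is also $\Phi_3$-optimal, and both equalities $\Phi_3(s_3) = \Phi_3(s_4)$ and $\Phi_4(s_3) = \Phi_4(s_4)$ follow from optimality of each solution under both objectives.

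I do not anticipate any real obstacle, as Lemma \ref{lemma1} already performs all the analytic work; the only care needed is bookkeeping the joint argmax in the presence of ties, but since the lemma is an iff on non-strict inequalities, any legitimate choice of $i^\star$ and $j^\star$ suffices. An even shorter presentation is possible by noting that the map $r \mapsto \sqrt{r} - 1/\sqrt{r}$ is strictly increasing on $[1,\infty)$ and relates the two per-rectangle ratios, so $\Phi_4(s)$ is a monotone function of $\Phi_3(s)$; however, invoking Lemma \ref{lemma1} twice as above is conceptually cleaner and stays closer to the lemma as stated.
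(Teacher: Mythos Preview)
Your proposal is correct and follows essentially the same approach as the paper: both arguments use Lemma~\ref{lemma1} first to identify a common argmax rectangle within each solution, and then a second time to transfer the inequality between $\Phi_3$ and $\Phi_4$ across the two optimal solutions. Your framing via full rank-equivalence of the two objectives is marginally more general than the paper's direct chase of the two needed inequalities, but the underlying mechanics are identical.
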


\begin{proof}
For any solution $s$, as a consequence of Lemma \ref{lemma1}, if $\Phi_4(s)$ attains its minimum for a rectangle $i \in \{1,...,n\}$ of length $l_i$ and height $h_i$, then $\Phi_3(s)$ attains its minimum for the same rectangle, and vice-versa. Therefore $\Phi_4(s) = \frac{|l_i - h_i|}{\sqrt{l_ih_i}}$ and $\Phi_3(s) = \frac{\max(l_i,h_i)}{\min(l_i,h_i)}$.

\noindent
Now, assume that $\Phi_4(s_4)$ and $\Phi_3(s_3)$ attain their minimum for rectangles $x$ and $y$, respectively. Therefore,
$$\Phi_4(s_4) = \frac{|l_x - h_x|}{\sqrt{l_xh_x}}, \Phi_3(s_4) = \frac{\max(l_x, h_x)}{\min(l_x,h_x)},$$
$$\Phi_4(s_3) = \frac{|l_y - h_y|}{\sqrt{l_yh_y}}, \Phi_3(s_3) = \frac{\max(l_y, h_y)}{\min(l_y,h_y)}.$$
Since $s_4$ is an optimal solution for objective $\Phi_4$, $\Phi_4(s_4) \leq \Phi_4(s_3)$ and:
$$\frac{|l_x - h_x|}{\sqrt{l_xh_x}} \leq \frac{|l_y - h_y|}{\sqrt{l_yh_y}}$$
Therefore, as a consequence of Lemma \ref{lemma1}, we have
$$\frac{\max(l_x,h_x)}{\min(l_x,h_x)} \leq \frac{\max(l_y, h_y)}{\min(l_y,h_y)}$$
Similarly, since $s_3$ is an optimal solution for objective $\Phi_3$, $\Phi_3(s_3) \leq \Phi_3(s_4)$ and:
$$\frac{\max(l_y, h_y)}{\min(l_y,h_y)} \leq \frac{\max(l_x,h_x)}{\min(l_x,h_x)}$$
Overall,
$$\Phi_3(s_3) = \frac{\max(l_y, h_y)}{\min(l_y,h_y)} = \frac{\max(l_x,h_x)}{\min(l_x,h_x)} = \Phi_3(s_4),$$
and $s_4$ is an optimal solution for objective $\Phi_3$.
A similar proof shows that $s_3$ is an optimal solution for objective $\Phi_4$.
\end{proof}

Based on this change of objective function, \textsc{Col-Aspect-Ratio} can be formulated as: \vspace*{-0.45cm}
\begin{align}
\min \hspace*{0.2cm}& \Phi \nonumber \\
\text{s.t.} \hspace*{0.2cm} & \text{Constraints (\ref{MIP_2})--(\ref{MIP_12})} \nonumber  \\
\label{M2_1}
&\delta_{ik} + L_1(1-x_{ik}) \geq w_{ik} - \sum_{j = 1}^{n} \frac{a_jx_{jk}}{L_1}  & i,k \in \{1, \dots, n\}
\\
\label{M2_2}
&\delta_{ik} + L_2(1-x_{ik}) \geq -w_{ik} + \sum_{j = 1}^{n} \frac{a_jx_{jk}}{L_1} &  i,k \in \{1, \dots, n\}
\\
\label{M2_3}
&\Phi \geq \frac{\delta_{ik}}{\sqrt{a_i}} &  i,k \in \{1, \dots, n\}
\\
\label{M2_4}
&\delta^k_i \geq 0 &  i,k \in \{1, \dots, n\}.
\end{align}

Solving this formulation to optimality generates an optimal solution of \textsc{Col-Aspect-Ratio}. The value of this solution must be recomputed a-posteriori according to the original objective. The model uses $n^2$ additional continuous variables $\delta_{ik}$, as well as a continuous variable $\Phi$ representing the value of the alternative objective function.
According to Constraints (\ref{M2_1}) and (\ref{M2_2}), if a rectangle $i$ is in layer $S_k$, then $\delta_{ik}=|l_i - h_i|$ where $l_i$ and $h_i$ represent the length and height of the rectangle in the current solution, otherwise $\delta_{ik}=0$.

\subsubsection{Second approach -- Binary search}

Another solution approach consists in modeling the decision problem \textsc{Col-Aspect-Ratio-$\Phi$} as a MIP. In this case, a maximum aspect ratio $\Phi$ is set as a constraint, and the goal is to find a feasible solution. The feasibility model can be written as follows:

\begin{align}
& \text{Constraints (\ref{MIP_2})--(\ref{MIP_12})} \nonumber  \\
& L_1 h_k = \sum_{i=1}^n a_i x_{ik} &  k  \in \{1, \dots, n\} \\
& w_{ik} \leq \Phi h_k &   i,k  \in \{1, \dots, n\} \label{AR-11} \\
& h_k  \leq  \Phi w_{ik} &  i,k  \in \{1, \dots, n\} \label{AR-12} \\
& h_k \in \mathbb{R} &  k \in \{1, \dots, n\}
\end{align}

In this model, each variable $h_k$ for $k \in \{1,\dots,n\}$ stores the height of layer $S_k$, and Constraints~(\ref{AR-11})--(\ref{AR-12}) force the aspect ratio to be no higher than $\Phi$.
To solve the original optimization problem, we do a binary search over $\Phi$ and solve \textsc{Col-Aspect-Ratio-$\Phi$} at each step. The starting interval is set to $[\Phi^\textsc{low},\Phi^\textsc{up}]$ where $\Phi^\textsc{low} = 1$ and $\Phi^\textsc{up} = \Phi_3(s_1)$, where $s_1$ is an optimal solution of \textsc{Col-Peri-Sum} found in $\cO(n \log n)$ time. The binary search stops as soon as $\Phi^\textsc{up} - \Phi^\textsc{low} < 0.01$.

\section{Computational Experiments} 
\label{ComExpSec}

To complete the theoretical results of this article, we conducted computational experiments to evaluate the efficiency of the solution methods for the three problems and compare their solutions. All algorithms were implemented in C++ and the mathematical models were solved with CPLEX version 12.4. The experiments were performed on a single thread of an Intel i7-3615QM 2.3 GHz CPU with 10GB RAM, running Mac OS Sierra version 10.12.6, and subject to a CPU time limit of one hour for each run. 

We randomly generated benchmark instances with $n \in \{10, 15, 20, 25, 30, 35, 40\}$ soft rectangles. These instances are subdivided into three classes, and three instances were generated for each class and size for a total of 63 instances.
\begin{itemize}[nosep]
\item \textbf{Class U} -- The area of each item is sampled in a uniform distribution: $X \sim \mathcal{U}(1, 200)$.
\item \textbf{Class MU} -- The area of each item is sampled in a mixture of three uniform distributions: $X \sim \frac{1}{3} \left[ \mathcal{U}(1, 10) + \mathcal{U}(11, 50) + \mathcal{U}(51, 150) \right]$.
\item \textbf{Class MN} -- The area of each item is sampled in a mixture of three normal distributions: $X \sim \frac{1}{3} \left[ \mathcal{N}(5,2) + \mathcal{N}(25,10) + \mathcal{N}(125,50) \right]$, but another sample is taken whenever the area is larger than 200.
\end{itemize}
Finally, the dimensions of the hard rectangle are generated as follows for each instance.
Let~$A$ be the sum of the areas of the soft rectangles. Length $L_1$ is randomly generated with uniform probability in $\{ \lceil \sqrt{\nicefrac{A}{3}} \rceil, \dots, \lfloor \sqrt{3A} \rfloor  \}$. The length of the other side is set to $L_2 = \lfloor \nicefrac{A}{L_1} \rfloor$. Then, $A - L_1  L_2$ soft rectangles are randomly selected, and the area of each rectangle is reduced by one unit in such a way that, after reduction, the area of the hard rectangle coincides with the sum of the areas of the soft rectangles. All benchmark instances are available at \url{https://w1.cirrelt.ca/~vidalt/en/research-data.html}.

\subsection{Performance analysis}

This section compares the CPU time needed to solve \textsc{Col-Peri-Sum}, \textsc{Col-Peri-Max}, and \textsc{Col-Aspect-Ratio}. As expected, the solution of \textsc{Col-Peri-Sum} in $\cO(n \log n)$ is extremely fast, with a measured CPU time of the order of a few milliseconds for all considered instances, such that we concentrate our analyzes on the mathematical programming algorithm for \textsc{Col-Peri-Max} as well as the reformulation and binary search approaches for \textsc{Col-Aspect-Ratio}. To speed up the solution methods, we always generate the optimal solution of \textsc{Col-Peri-Sum} and use it as an initial feasible solution.

Tables \ref{tableU} to \ref{tableMN} report, for each instance class and algorithm, the number of nodes in the search tree (\textbf{Nodes}), the CPU time in seconds (\textbf{Time}), as well as the best found lower bound (\textbf{LB}) and upper bound (\textbf{UB}). For the reformulation-based approach for \textsc{Col-Aspect-Ratio}, columns \textbf{LB}$_{4}$ and \textbf{UB}$_{4}$ correspond to objective $\Phi_4$, and the value of the primal solution for objective $\Phi_3$ is indicated in column \textbf{UB}$_{3}$. TL in column \textbf{Time} means that the CPU time limit of 3600 seconds has been attained. Finally, for the binary search approach  for \textsc{Col-Aspect-Ratio}, we indicate the number of completed iterations in column~\textbf{It}$_\textbf{BS}$.

\begin{table}[htbp]
\renewcommand{\arraystretch}{1.2}
\setlength\tabcolsep{8pt}
\caption{Class U -- Performance comparisons} \label{tableU}
\hspace*{-0.9cm}
\scalebox{0.78}
{
\begin{tabular}{|cc|cccc|ccccc|cccc|}
\hline
\multicolumn{2}{|c|}{Data} & \multicolumn{4}{c|}{\textsc{Col-Peri-Max}} & \multicolumn{5}{c|}{\textsc{Col-Aspect-Ratio} -- Reformulation} & \multicolumn{4}{c|}{\textsc{Col-Aspect-Ratio} -- B. Search}\\ 
\textbf{\#} & \textbf{Size} & \textbf{Nodes} & \textbf{Time} & \textbf{LB} & \textbf{UB} & \textbf{Nodes} & \textbf{Time} & \textbf{LB}$_{4}$ & \textbf{UB}$_{4}$ & \textbf{UB}$_{3}$ & \textbf{It}$_\textbf{BS}$ & \textbf{Time} & \textbf{LB} & \textbf{UB} \\
\hline
p01&10&547&0.53&52.53&52.53&176&0.39&0.95&0.95&2.51&8&1.21&2.51&2.51\\
p02&10&27&0.20&55.17&55.17&68&0.22&1.72&1.72&4.75&10&1.38&4.75&4.75\\
p03&10&101&0.20&55.54&55.54&1&0.08&1.05&1.05&2.73&8&0.93&2.72&2.73\\
p04&15&5139k&TL&52.27&55.14&69&0.74&0.82&0.82&2.22&8&4.91&2.22&2.22\\
p05&15&38k&92.46&51.38&51.38&193&1.85&0.99&0.99&2.59&9&4.27&2.59&2.60\\
p06&15&8054k&TL&48.63&52.46&640k&479.41&1.98&1.98&5.76&10&560.23&5.75&5.76\\
p07&20&351k&TL&43.27&56.57&308&3.38&2.60&2.60&8.64&12&29.04&8.63&8.64\\
p08&20&2993k&TL&34.76&56.14&79k&202.12&0.47&0.47&1.59&7&174.54&1.59&1.59\\
p09&20&92&3.37&53.71&53.71&3k&12.31&1.46&1.46&3.86&9&35.94&3.85&3.86\\
p10&25&479k&TL&49.00&55.86&80k&851.98&0.74&0.74&2.07&8&679.89&2.06&2.07\\
p11&25&935k&TL&45.82&55.14&116&12.37&1.60&1.60&4.33&7&TL&4.29&4.37\\
p12&25&601k&TL&33.01&53.96&393k&TL&0.81&1.01&2.65&2&TL&2.62&4.25\\
p13&30&335k&TL&25.86&54.70&293k&TL&0.00&0.86&2.30&2&TL&2.20&3.41\\
p14&30&318k&TL&40.41&54.11&179k&TL&0.22&1.79&5.01&3&TL&1.00&7.22\\
p15&30&182k&TL&33.64&53.37&328k&TL&1.71&2.59&8.57&3&TL&1.00&17.06\\
p16&35&389k&TL&40.00&56.43&714k&TL&0.00&1.79&5.00&2&TL&1.00&5.34\\
p17&35&247k&TL&36.01&56.43&150k&TL&0.00&1.47&3.92&0&TL&1.00&8.34\\
p18&35&109k&TL&41.35&55.71&149k&TL&0.34&2.59&8.58&1&TL&1.00&28.70\\
p19&40&74k&TL&19.30&56.43&108k&TL&0.64&1.42&3.76&1&TL&1.00&5.86\\
p20&40&101k&TL&44.48&54.79&54k&TL&0.12&1.09&2.83&1&TL&1.93&2.86\\
p21&40&53k&TL&44.12&56.14&68k&TL&0.05&2.73&9.35&2&TL&1.00&21.33\\
\hline
\end{tabular}
}
\end{table}

\begin{table}[htbp]
\renewcommand{\arraystretch}{1.2}
\setlength\tabcolsep{7.82pt}
\caption{Class MU -- Performance comparisons} \label{tableMU}
\hspace*{-0.9cm}
\scalebox{0.78}
{
\begin{tabular}{|cc|cccc|ccccc|cccc|}
\hline
\multicolumn{2}{|c|}{Data} & \multicolumn{4}{c|}{\textsc{Col-Peri-Max}} & \multicolumn{5}{c|}{\textsc{Col-Aspect-Ratio} -- Reformulation} & \multicolumn{4}{c|}{\textsc{Col-Aspect-Ratio} -- B. Search}\\ 
\textbf{\#} & \textbf{Size} & \textbf{Nodes} & \textbf{Time} & \textbf{LB} & \textbf{UB} & \textbf{Nodes} & \textbf{Time} & \textbf{LB}$_{4}$ & \textbf{UB}$_{4}$ & \textbf{UB}$_{3}$ & \textbf{It}$_\textbf{BS}$ & \textbf{Time} & \textbf{LB} & \textbf{UB} \\
\hline
p01&10&179&0.31&55.29&55.29&17&0.24&1.39&1.39&3.65&9&1.03&3.64&3.65\\
p02&10&44&0.07&55.78&55.78&214&0.18&1.36&1.36&3.57&9&1.38&3.56&3.57\\
p03&10&4k&1.22&52.76&52.76&64&0.18&1.97&1.97&5.71&9&0.72&5.70&5.71\\
p04&15&1&0.49&56.17&56.17&720&1.67&2.20&2.20&6.67&11&12.05&6.66&6.67\\
p05&15&6938k&TL&43.43&51.23&294k&280.22&1.36&1.36&3.57&9&451.13&3.56&3.57\\
p06&15&185k&111.49&54.70&54.70&314&1.87&2.13&2.13&6.39&10&8.82&6.39&6.39\\
p07&20&1943k&TL&40.10&55.86&475k&1885.98&2.09&2.09&6.19&10&1450.33&6.19&6.19\\
p08&20&2223k&TL&28.26&53.81&1787k&TL&0.95&1.05&2.74&2&TL&2.69&4.38\\
p09&20&66k&285.98&55.86&55.86&142k&550.19&2.18&2.18&6.61&12&2355.92&6.61&6.61\\
p10&25&688k&TL&40.79&55.71&281k&TL&0.76&1.33&3.48&3&TL&2.98&3.96\\
p11&25&169k&TL&51.08&54.85&220k&TL&1.47&2.24&6.86&2&TL&1.00&12.49\\
p12&25&331k&TL&47.41&56.17&225k&TL&1.77&2.13&6.39&5&TL&5.72&6.66\\
p13&30&325k&TL&21.43&53.07&318k&TL&0.61&1.40&3.68&1&TL&1.00&7.02\\
p14&30&196k&TL&42.46&55.43&176k&TL&1.28&1.96&5.67&2&TL&3.69&6.38\\
p15&30&267k&TL&32.40&55.43&150k&TL&0.91&1.86&5.27&2&TL&1.00&7.10\\
p16&35&115k&TL&41.47&55.86&62k&TL&0.27&2.06&6.09&1&TL&4.94&8.89\\
p17&35&250k&TL&23.28&44.36&5k&297.25&1.39&1.39&3.67&9&471.57&3.67&3.67\\
p18&35&226k&TL&32.23&55.28&143k&TL&0.04&1.96&5.68&1&TL&1.00&7.20\\
p19&40&204k&TL&21.41&54.70&7k&181.11&1.48&1.48&3.92&4&TL&3.87&4.08\\
p20&40&290k&TL&29.09&56.00&72k&TL&0.11&1.53&4.11&3&TL&3.88&5.32\\
p21&40&154k&TL&15.76&52.76&75k&TL&0.00&1.30&3.39&0&TL&1.00&4.60\\
\hline
\end{tabular}
}
\end{table}

\begin{table}[htbp]
\renewcommand{\arraystretch}{1.2}
\setlength\tabcolsep{8pt}
\caption{Class MN -- Performance comparisons} \label{tableMN}
\hspace*{-0.9cm}
\scalebox{0.78}
{
\begin{tabular}{|cc|cccc|ccccc|cccc|}
\hline
\multicolumn{2}{|c|}{Data} & \multicolumn{4}{c|}{\textsc{Col-Peri-Max}} & \multicolumn{5}{c|}{\textsc{Col-Aspect-Ratio} -- Reformulation} & \multicolumn{4}{c|}{\textsc{Col-Aspect-Ratio} -- B. Search}\\ 
\textbf{\#} & \textbf{Size} & \textbf{Nodes} & \textbf{Time} & \textbf{LB} & \textbf{UB} & \textbf{Nodes} & \textbf{Time} & \textbf{LB}$_{4}$ & \textbf{UB}$_{4}$ & \textbf{UB}$_{3}$ & \textbf{It}$_\textbf{BS}$ & \textbf{Time} & \textbf{LB} & \textbf{UB} \\
\hline
p01&10&386&0.31&50.83&50.83&10&0.27&1.05&1.05&2.75&9&0.87&2.74&2.75\\
p02&10&62&0.10&51.50&51.50&1&0.06&0.82&0.82&2.22&7&0.41&2.21&2.22\\
p03&10&267&0.27&51.00&51.00&33&0.28&1.94&1.94&5.60&9&0.75&5.59&5.60\\
p04&15&38k&20.37&39.80&39.80&13k&18.08&1.55&1.55&4.17&9&147.71&4.17&4.17\\
p05&15&130k&94.84&40.99&40.99&2k&5.06&1.03&1.03&2.68&8&20.51&2.67&2.68\\
p06&15&4971k&TL&44.67&45.96&27&0.86&1.65&1.65&4.49&9&5.18&4.49&4.50\\
p07&20&364&9.13&53.17&53.17&23k&200.99&2.82&2.82&9.83&11&2866.39&9.83&9.83\\
p08&20&6k&35.00&53.38&53.38&31k&484.53&2.06&2.06&6.09&10&259.48&6.08&6.09\\
p09&20&1427k&TL&36.61&54.55&59k&290.84&1.05&1.05&2.73&8&554.92&2.73&2.73\\
p10&25&2188k&TL&29.04&43.08&987k&TL&0.56&0.67&1.94&4&TL&1.76&1.95\\
p11&25&233k&TL&40.91&54.55&156&11.20&2.49&2.49&8.09&10&76.60&8.08&8.09\\
p12&25&1341k&1699.02&53.67&53.67&429k&TL&0.55&0.93&2.45&8&661.27&2.44&2.45\\
p13&30&7k&139.13&54.12&54.12&285k&TL&1.39&2.08&6.18&5&TL&5.47&6.21\\
p14&30&843k&TL&35.00&54.41&316k&TL&0.80&1.18&3.06&2&TL&2.28&3.55\\
p15&30&3k&42.77&51.23&51.23&105k&TL&1.85&2.81&9.81&2&TL&6.85&12.69\\
p16&35&113k&TL&39.18&55.43&87k&TL&0.14&1.82&5.12&1&TL&1.00&14.14\\
p17&35&1526k&TL&40.41&53.33&272k&TL&0.14&0.56&1.73&2&TL&1.55&1.74\\
p18&35&169k&TL&44.24&52.57&80k&TL&0.55&1.91&5.48&1&TL&1.00&9.88\\
p19&40&137k&TL&34.26&56.43&49k&TL&0.59&1.93&5.53&0&TL&1.00&6.92\\
p20&40&294k&TL&30.57&55.28&251k&TL&0.84&1.00&2.62&3&TL&2.31&2.64\\
p21&40&141k&TL&9.94&54.55&137k&TL&1.09&1.71&4.70&0&TL&1.00&5.09\\
\hline
\end{tabular}
}
\end{table}

As observed in these experiments, the proposed MIP models can be solved to optimality for all benchmark instances with 10 soft rectangles, as well as a few instances with up to 30 rectangles for \textsc{Col-Peri-Max} and 40 rectangles for \textsc{Col-Aspect-Ratio}. Yet, the number of search nodes and CPU time grow very quickly with the number of soft rectangles $n$.
Despite the symmetry-breaking inequalities, some instances with 15 rectangles lead to over a million search nodes.
The reformulation approach and the binary search approach for \textsc{Col-Aspect-Ratio} find 31/63 and 30/63 optimal solutions, respectively.
The reformulation approach is generally faster than the binary search algorithm for small instances. Yet, a drawback of this algorithm is that it searches for an optimal solution according to objective $\Phi_4$. When optimality is attained, this solution is optimal for $\Phi_3$ due to Theorem \ref{optimal}. When an optimality gap remains, the primal solution obtained from the algorithm gives a valid upper bound for objective $\Phi_3$, but the dual information (and performance guarantee) is lost. Finally, we did not observe a significant difference of performance when comparing the results of the three instance classes (\textbf{U},\textbf{MU} and \textbf{MN}). We noted that two larger instances with 35 and 40 rectangles were solved to optimality for class \textbf{MU}, a phenomenon which did not happen for \textbf{U} and \textbf{MN}.


In general, the difficulties encountered when solving instances with 10 to 40 rectangles already show the limitations of available mathematical programming algorithms for \textsc{Col-Peri-Max} and \textsc{Col-Aspect-Ratio}. Future progress on exact approaches for NP-hard problems may allow to solve larger instances in the future, but years of research may be needed before handling more realistic instances with over a hundred rectangles. Alternatively, heuristics and metaheuristics could be used to solve larger problems. As we noted a complexity gap between \textsc{Col-Peri-Sum} and the other two problems, in spite of the relations between the three objectives, we are interested to see if the solution of \textsc{Col-Peri-Sum} can constitute a viable heuristic for the two more difficult objectives. This is the focus of the next section.

\subsection{Solution evaluations in relation to other objectives}
\label{last-exp}

The objective functions of the three considered problems are different but not strongly conflicting. Yet, \textsc{Col-Peri-Sum} can be solved in $\cO(n \log n)$, while \textsc{Col-Peri-Max} and \textsc{Col-Aspect-Ratio} are NP-hard. In this last analysis, we investigate how \emph{close} these problems are from each other in practice. This is achieved by evaluating the optimal solution of one problem according to the objective function of each other. In particular, we are interested to see if the solution of \textsc{Col-Peri-Sum} can be used as a simple heuristic for \textsc{Col-Peri-Max} and \textsc{Col-Aspect-Ratio}.

For this analysis, we gathered all instances that are solved to optimality for all three problems:
all instances with $n=10$; instances U-p05, MU-p04, MU-p06, MN-p04, and MN-p05 with $n=15$; and instances U-p09, MU-p09, MN-p07, and MN-p08 with $n=20$. For each objective~$\Phi_x$, we evaluated the quality $\Phi_y(s^*_x)$ of its optimal solution $s^*_x$ relatively to each other objective $y \in \{1,2,3\}$, and report the results as the performance ratio $\Phi_y(s^*_x) / \Phi_y(s^*_y)$ in Table~\ref{evalOthers}.

\begin{table}[htbp]
\renewcommand{\arraystretch}{1.2}
\setlength\tabcolsep{10pt}
\caption{Optimal solutions for one objective evaluated according to the other objectives} \label{evalOthers}
\centering
\scalebox{0.85}
{
\begin{tabular}{|c|ccc|ccc|ccc|}
\hline
\textbf{Evaluated as}&\multicolumn{3}{c|}{\textsc{Col-Peri-Sum} -- $\boldsymbol\Phi_{\boldsymbol1}$}&\multicolumn{3}{c|}{ \textsc{Col-Peri-Max} -- $\boldsymbol\Phi_{\boldsymbol2}$}&\multicolumn{3}{c|}{\textsc{Col-Aspect-Ratio} -- $\boldsymbol\Phi_{\boldsymbol3}$}\\
\textbf{Solved as}&$\boldsymbol\Phi_{\boldsymbol1}$&$\boldsymbol\Phi_{\boldsymbol2}$&$\boldsymbol\Phi_{\boldsymbol3}$&$\boldsymbol\Phi_{\boldsymbol1}$&$\boldsymbol\Phi_{\boldsymbol2}$&$\boldsymbol\Phi_{\boldsymbol3}$&$\boldsymbol\Phi_{\boldsymbol1}$&$\boldsymbol\Phi_{\boldsymbol2}$&$\boldsymbol\Phi_{\boldsymbol3}$\\
\hline
MN-p01&1.00&1.30&1.00&1.12&1.00&1.07&1.60&13.09&1.00\\
MN-p02&1.00&1.11&1.00&1.00&1.00&1.00&1.00&22.02&1.00\\
MN-p03&1.00&1.55&1.00&1.04&1.00&1.04&1.00&12.86&1.00\\
U-p01&1.00&1.06&1.00&1.08&1.00&1.09&1.01&4.27&1.00\\
U-p02&1.00&1.00&1.13&1.00&1.00&1.31&1.60&1.60&1.00\\
U-p03&1.00&1.00&1.00&1.00&1.00&1.00&1.00&1.00&1.00\\
MU-p01&1.00&1.23&1.05&1.05&1.00&1.13&1.07&5.97&1.00\\
MU-p02&1.00&1.06&1.01&1.00&1.00&1.00&1.00&22.69&1.00\\
MU-p03&1.00&1.19&1.00&1.05&1.00&1.05&1.00&7.09&1.00\\
MN-p04&1.00&1.41&1.08&1.00&1.00&1.09&1.08&38.85&1.00\\
MN-p05&1.00&1.24&1.02&1.00&1.00&1.04&1.00&9.76&1.00\\
U-p05&1.00&1.04&1.02&1.01&1.00&1.03&1.67&2.31&1.00\\
MU-p04&1.00&1.11&1.02&1.13&1.00&1.06&2.24&21.59&1.00\\
MU-p06&1.00&1.27&1.01&1.00&1.00&1.00&1.24&29.31&1.00\\
MN-p07&1.00&1.15&1.25&1.24&1.00&1.55&1.16&14.65&1.00\\
MN-p08&1.00&1.23&1.14&1.15&1.00&1.33&1.22&9.31&1.00\\
U-p09&1.00&1.06&1.00&1.06&1.00&1.10&1.37&6.35&1.00\\
MU-p09&1.00&1.24&1.04&1.00&1.00&1.07&5.74&29.47&1.00\\
\hline
\textbf{Average}&\textbf{1.00}&\textbf{1.18}&\textbf{1.04}&\textbf{1.05}&\textbf{1.00}&\textbf{1.11}&\textbf{1.50}&\textbf{14.01}&\textbf{1.00}\\
\hline
\end{tabular}
}
\end{table}

These experiments first confirm the fact that the three objectives produce significantly different solutions. For these instances, the optimal solutions of \textsc{Col-Peri-Sum} are within an average factor of $1.05$ of the optimal solutions of \textsc{Col-Peri-Max} when evaluated according to objective $\Phi_2$, and are better than the optimal solutions of \textsc{Col-Aspect-Ratio} with a factor of 1.11. Similarly, the optimal solutions of \textsc{Col-Peri-Sum} give a better approximation of the optimal solutions of \textsc{Col-Aspect-Ratio} than the optimal solutions of \textsc{Col-Peri-Max} (with a factor of 1.50 compared to 14.01).

One likely explanation of these observations is that the objective of \textsc{Col-Peri-Max} mainly concentrates the optimization on rectangles of large area, so as to minimize their perimeter. In \textsc{Col-Peri-Max}, small rectangles almost never play a role as they are unlikely to realize the maximum. In \textsc{Col-Aspect-Ratio}, in contrast, small and large rectangles are equally important, since the maximum aspect ratio can be attained regardless of the rectangle area. Finally, \textsc{Col-Peri-Sum} must optimize the perimeter of all rectangles, regardless of their area, so as to minimize the total sum. This objective leads to optimal solutions which tend to have good overall aspect ratios, regardless of rectangle size.

Finally, a precise analysis of the solutions shows that \textsc{Col-Peri-Sum} produced five optimal solutions for \textsc{Col-Peri-Max} and six optimal solutions for \textsc{Col-Aspect-Ratio} over 18 instances. In one exceptional case (instance p03 of class U), the three methods converged towards the same optimal solution. This situation happened because the optimal solution contained a single layer, but other situations can lead to this behavior: e.g., if a feasible solution exists in which all soft rectangles take the shape of a square, then this solution is indeed optimal for the three objectives.

\section{Conclusions}
\label{ConSec}

In this paper, we investigated three soft rectangle packing problems: \textsc{Col-Peri-Sum}, \textsc{Col-Peri-Max} and \textsc{Col-Aspect-Ratio}. The effective resolution of these problems is of foremost importance for the ongoing land-allocation reform in Vietnam. The objectives considered in these problems model different aspects of fairness and wasted-land minimization.
We introduced an $\cO(n \log n)$ exact algorithm for \textsc{Col-Peri-Sum}. Then, we demonstrated that the two others problems are NP-hard, and proposed compact MIP formulations to solve them. In the case of \textsc{Col-Aspect-Ratio}, an objective reformulation and a binary search scheme were proposed to overcome non-linearities. Through a set of experimental analyzes on 63 benchmark instances, we observed that the resolution of the MIP formulations is currently practicable for problem instances involving 10 to 40 soft rectangles. To solve larger instances of \textsc{Col-Peri-Max} and \textsc{Col-Aspect-Ratio}, we may use the $\cO(n \log n)$-time algorithm for \textsc{Col-Peri-Sum} as a simple heuristic.

The research perspectives are numerous. The proposed MIP formulations can possibly be improved with additional valid inequalities or optimality cuts, and the set-partitioning formulation of the problem can certainly be exploited to develop efficient branch-and-price algorithms. Metaheuristics could also be developed to provide solutions for larger instances or integrate additional restrictions or objectives. Finally, whether \textsc{Col-Peri-Max} and \textsc{Col-Aspect-Ratio} are \emph{strongly} NP-hard remains an interesting open question.


\begin{thebibliography}{31}
\expandafter\ifx\csname natexlab\endcsname\relax\def\natexlab#1{#1}\fi
\expandafter\ifx\csname url\endcsname\relax
  \def\url#1{{\tt #1}}\fi
\expandafter\ifx\csname urlprefix\endcsname\relax\def\urlprefix{URL }\fi
\expandafter\ifx\csname urlstyle\endcsname\relax
  \expandafter\ifx\csname doi\endcsname\relax
  \def\doi#1{doi:\discretionary{}{}{}#1}\fi \else
  \expandafter\ifx\csname doi\endcsname\relax
  \def\doi{doi:\discretionary{}{}{}\begingroup \urlstyle{rm}\Url}\fi \fi

\bibitem[{Arimoto(2010)}]{Arimoto2010}
Arimoto, Y. 2010.
\newblock Impact of land readjustment project on farmland use and structural
  adjustment: The case of {N}iigata, {J}apan.
\newblock {\it Agricultural and Applied Economics Association 2010 AAEA, CAES,
  WAEA Joint Annual Meeting,\/}. Denver, USA.

\bibitem[{Beaumont et~al.(2001)Beaumont, Boudet, Rastello, and
  Robert}]{Beaumont2001}
Beaumont, O., V.~Boudet, F.~Rastello, Y.~Robert. 2001.
\newblock {Matrix multiplication on heterogeneous platforms}.
\newblock {\it IEEE Transactions on Parallel and Distributed Systems\/} {\bf
  12}(10) 1033--1051.

\bibitem[{Beaumont et~al.(2002)Beaumont, Boudet, Rastello, and
  Robert}]{Beaumont2002}
Beaumont, O., V.~Boudet, F.~Rastello, Y.~Robert. 2002.
\newblock Partitioning a square into rectangles: {NP}-{C}ompleteness and
  approximation algorithms.
\newblock {\it Algorithmica\/} {\bf 34}(3) 217--239.

\bibitem[{Borgwardt et~al.(2011)Borgwardt, Brieden, and
  Gritzmann}]{Borgwardt2011}
Borgwardt, S., A.~Brieden, P.~Gritzmann. 2011.
\newblock Constrained minimum-k-star clustering and its application to the
  consolidation of farmland.
\newblock {\it Operational Research\/} {\bf 11}(1) 1--17.

\bibitem[{Borgwardt et~al.(2014)Borgwardt, Brieden, and
  Gritzmann}]{Borgwardt2014}
Borgwardt, S., A.~Brieden, P.~Gritzmann. 2014.
\newblock Geometric clustering for the consolidation of farmland and woodland.
\newblock {\it The Mathematical Intelligencer\/} {\bf 36}(2) 37--44.

\bibitem[{Brieden and Gritzmann(2004)}]{Brieden20014}
Brieden, A., P.~Gritzmann. 2004.
\newblock A quadratic optimization model for the consolidation of farmland by
  means of lend-lease agreements.
\newblock D.~Ahr, R.~Fahrion, M.~Oswald, G.~Reinelt, eds., {\it Operations
  Research Proceedings 2003\/}. Springer Berlin Heidelberg, 324--331.

\bibitem[{Bui et~al.(2013)Bui, Pham, and Deville}]{trung2013}
Bui, Q.~T., Q.~D. Pham, Y.~Deville. 2013.
\newblock Solving the agricultural land allocation problem by constraint-based
  local search.
\newblock C.~Schulte, ed., {\it Principles and Practice of Constraint
  Programming\/}, {\it Lecture Notes in Computer Science\/}, vol. 8124.
  Springer Berlin Heidelberg, 749--757.

\bibitem[{Cay et~al.(2006)Cay, Ayten, and Iscan}]{cay2006}
Cay, T., T.~Ayten, F.~Iscan. 2006.
\newblock An investigation of reallocation model based on interview in land
  consolidation.
\newblock {\it Proceeding of the 23rd FIG Congress, Shaping the Change\/}.
  Munich, Germany.

\bibitem[{Cay et~al.(2010)Cay, Ayten, and Iscan}]{Cay2010262}
Cay, T., T.~Ayten, F.~Iscan. 2010.
\newblock Effects of different land reallocation models on the success of land
  consolidation projects: Social and economic approaches.
\newblock {\it Land Use Policy\/} {\bf 27}(2) 262--269.

\bibitem[{Cay and Uyan(2013)}]{Cay2013541}
Cay, T., M.~Uyan. 2013.
\newblock Evaluation of reallocation criteria in land consolidation studies
  using the analytic hierarchy process ({AHP}).
\newblock {\it Land Use Policy\/} {\bf 30}(1) 541--548.

\bibitem[{Demetriou et~al.(2013)Demetriou, See, and Stillwell}]{Demetriou2013}
Demetriou, D., L.~See, J.~Stillwell. 2013.
\newblock A spatial genetic algorithm for automating land partitioning.
\newblock {\it International Journal of Geographical Information Science\/}
  {\bf 27} 2391--2409.

\bibitem[{F{\"{u}}genschuh et~al.(2014)F{\"{u}}genschuh, Junosza-Szaniawski,
  and Lonc}]{Fugenschuh2014}
F{\"{u}}genschuh, A, K.~Junosza-Szaniawski, Z.~Lonc. 2014.
\newblock Exact and approximation algorithms for a soft rectangle packing
  problem.
\newblock {\it Optimization\/} {\bf 63}(11) 1637--1663.

\bibitem[{Garey and Johnson(1990)}]{Garey:1990:CIG:574848}
Garey, M.~R., D.~S. Johnson. 1990.
\newblock {\it Computers and Intractability: A Guide to the Theory of
  {NP}-Completeness\/}.
\newblock W. H. Freeman \& Co., New York, NY, USA.

\bibitem[{{General Statistical Office}(2004)}]{ref7}
{General Statistical Office}. 2004.
\newblock {\it Statistical yearbook 2004\/}.
\newblock Statistical Publishing House.

\bibitem[{Gliesch et~al.(2017)Gliesch, Ritt, and Moreira}]{Gliesch2017}
Gliesch, A., M.~Ritt, M.~C.~O. Moreira. 2017.
\newblock A genetic algorithm for fair land allocation.
\newblock {\it Proceedings of the Genetic and Evolutionary Computation
  Conference\/}. GECCO '17, ACM, New York, NY, USA, 793--800.

\bibitem[{Hakli and Uuz(2017)}]{Hakli2017}
Hakli, H., H.~Uuz. 2017.
\newblock A novel approach for automated land partitioning using genetic
  algorithm.
\newblock {\it Expert Systems with Applications\/} {\bf 82}(1) 10--18.

\bibitem[{Heltberg(1998)}]{ref8}
Heltberg, R. 1998.
\newblock Rural market imperfections and the farm size-productivity
  relationship: {E}vidence from {P}akistan.
\newblock {\it World Development\/} {\bf 26} 1807--1826.

\bibitem[{Huang et~al.(2011)Huang, Li, Chen, and Li}]{Huang2011}
Huang, Q., M.~Li, Z.~Chen, F.~Li. 2011.
\newblock Land consolidation: {A}n approach for sustainable development in
  rural {C}hina.
\newblock {\it AMBIO\/} {\bf 40}(1) 93--95.

\bibitem[{Ibaraki and Nakamura(2006)}]{Ibaraki}
Ibaraki, T., K.~Nakamura. 2006.
\newblock Packing problems with soft rectangles.
\newblock F.~Almeida, M.~J. Blesa~Aguilera, C.~Blum, J.~M. Moreno~Vega,
  M.~P{\'e}rez~P{\'e}rez, A.~Roli, M.~Sampels, eds., {\it Hybrid
  Metaheuristics\/}. Springer Berlin Heidelberg, Berlin, Heidelberg, 13--27.

\bibitem[{Ji et~al.(2017)Ji, He, Jin, Lan, and Li}]{Ji2017}
Ji, P., K.~He, Y.~Jin, H.~Lan, C.~Li. 2017.
\newblock An iterative merging algorithm for soft rectangle packing and its
  extension for application of fixed-outline floorplanning of soft modules.
\newblock {\it Computers {\&} Operations Research\/} {\bf 86} 110--123.

\bibitem[{{Journal of Rural Economy}(2008)}]{Doe:2009:Online}
{Journal of Rural Economy}. 2008.
\newblock Agricultural mechanization: When does it go over the start?
\newblock
  \url{http://ipsard.gov.vn/mobile/tID2264_Co-gioi-hoa-nong-nghiep-Khi-nao-qua-buoc-khoi-dong-.html}.
\newblock [Online; accessed 01-May-2018].

\bibitem[{Kong et~al.(1987)Kong, Mount, and Werman}]{KONG1987239}
Kong, T.~Y, D.~M. Mount, M.~Werman. 1987.
\newblock The decomposition of a square into rectangles of minimal perimeter.
\newblock {\it Discrete Applied Mathematics\/} {\bf 16}(3) 239--243.

\bibitem[{Kong et~al.(1988)Kong, Mount, and Werman}]{KONG2}
Kong, T.~Y., D.~M. Mount, M.~Werman. 1988.
\newblock The decomposition of a rectangle into rectangles of minimal
  perimeter.
\newblock {\it SIAM Journal on Computing\/} {\bf 17}(6) 1215--1231.

\bibitem[{Lam(2001)}]{ref9}
Lam, M-L. 2001.
\newblock Land fragmentation--a constraint for {V}ietnam agriculture.
\newblock {\it Vietnam's socio-economic development\/} {\bf 26} 73--80.

\bibitem[{March and MacAulay(2006)}]{ref10}
March, S.~P., T.~G. MacAulay. 2006.
\newblock Farm size and land use change in {V}ietnam following land reforms.
\newblock {\it 47th Annual Conference of the Australian Agricultural and
  Resource Economics Society\/}. Fremantle, Australia.

\bibitem[{Nagamochi and Abe(2007)}]{Nagamochi2007523}
Nagamochi, H., Y.~Abe. 2007.
\newblock An approximation algorithm for dissecting a rectangle into rectangles
  with specified areas.
\newblock {\it Discrete Applied Mathematics\/} {\bf 155}(4) 523--537.

\bibitem[{Paes et~al.(2017)Paes, Pessoa, and Vidal}]{Paes2016}
Paes, F.G., A.A. Pessoa, T.~Vidal. 2017.
\newblock {A hybrid genetic algorithm with decomposition phases for the unequal
  area facility layout problem}.
\newblock {\it European Journal of Operational Research\/} {\bf 256}(3)
  742--756.

\bibitem[{Pham et~al.(2007)Pham, MacAulay, and Marsh}]{pham07}
Pham, V-H., G.~T. MacAulay, S.~P. Marsh. 2007.
\newblock The economics of land fragmentation in the north of {V}ietnam*.
\newblock {\it Australian Journal of Agricultural and Resource Economics\/}
  {\bf 51}(2) 195--211.

\bibitem[{Sundqvist and Anderson(2006)}]{Patrik2006}
Sundqvist, P., L.~Anderson. 2006.
\newblock A study of the impacts of land fragmentation on agricultural
  productivity in {N}orthern {V}ietnam.
\newblock Bachelor's thesis, Uppsala University, Sweden.

\bibitem[{Wilber(1988)}]{Wilber1988418}
Wilber, R. 1988.
\newblock The concave least-weight subsequence problem revisited.
\newblock {\it Journal of Algorithms\/} {\bf 9}(3) 418--425.

\bibitem[{Young et~al.(2001)Young, Chu, Luk, and Wong}]{Young2001}
Young, F.~Y., C.~C.~N. Chu, W.~S. Luk, Y.~C. Wong. 2001.
\newblock Handling soft modules in general nonslicing floorplan using
  lagrangian relaxation.
\newblock {\it IEEE Transactions on Computer-Aided Design of Integrated
  Circuits and Systems\/} {\bf 20}(5) 687--692.

\end{thebibliography}

\end{document}